\documentclass[12pt,notitlepage]{amsart}
\usepackage{latexsym,amsfonts,amssymb,amsmath,amsthm}
\usepackage{graphicx}
\usepackage{color}
\pagestyle{headings}

\usepackage[inner=1.0in,outer=1.0in,bottom=1.0in, top=1.0in]{geometry}

\newcommand{\mz}{\ensuremath{\mathbb Z}}
\newcommand{\mr}{\ensuremath{\mathbb R}}
\newcommand{\mh}{\ensuremath{\mathbb H}}
\newcommand{\mq}{\ensuremath{\mathbb Q}}
\newcommand{\mc}{\ensuremath{\mathbb C}}

 % ring of integers!

\newcommand{\half}{\ensuremath{ \frac{1}{2}}}

\theoremstyle{plain}		
	\newtheorem{mytheo}{Theorem} [section]
	
	\newtheorem{myprop}[mytheo]{Proposition}
	
     \newtheorem{mylemma}[mytheo]{Lemma}

\theoremstyle{remark}

\numberwithin{equation}{section}
\numberwithin{figure}{section}

\begin{document}
\title{Zeros of the weight two Eisenstein Series}
\author{Rachael Wood}
\address{Harding University \\ Searcy \\ AR  72149 \\ U.S.A.}
\email{rwood5@harding.edu}
\author{Matthew P. Young} 
\address{Department of Mathematics \\
	  Texas A\&M University \\
	  College Station \\
	  TX 77843-3368 \\
		U.S.A.}
\email{myoung@math.tamu.edu}

\thanks{This work was conducted in summer 2013 during an REU conducted at Texas A\&M University.  The authors thank the Department of Mathematics at Texas A\&M and the NSF for supporting the REU.
In addition, this material is based upon work of M.Y. supported by the National Science Foundation under agreement No. DMS-1101261.  Any opinions, findings and conclusions or recommendations expressed in this material are those of the authors and do not necessarily reflect the views of the National Science Foundation.}

\begin{abstract}
We develop some of the finer details of the location of the zeros of the weight two Eisenstein series.  These zeros are the same as the zeros of the derivative of the Ramanujan delta function.
\end{abstract}

\maketitle

\section{Introduction}
The zero sets of modular forms have attracted much attention over the years.  In an influential paper, 
F. Rankin and Swinnerton-Dyer \cite{RS} showed that the zeros of the weight $k$ Eisenstein series $E_{k}(z)$ that lie in the standard fundamental domain lie on the circle $|z|=1$.  In sharp contrast, Rudnick \cite{Rudnick} showed the zeros of Hecke cusp forms become equidistributed in the fundamental domain as the weight becomes large, conditionally on the mass equidistribution conjecture which has since been proven by Holowinsky and Soundararajan \cite{HolowinskySound}.  
There have been a number of other results on zeros of various types of modular forms; see for example \cite{Hahn} \cite{MNS} \cite{DukeJenkins}  \cite{GhoshSarnak}.  In all of these cases,
one observes a dichotomy with the zeros either being patterned (say, with all zeros lying on some lower-dimensional set), or equidistributed in some larger space (which one could view as randomness).

One naturally wonders about the location of zeros of the derivative of a modular form (which is not a modular form, but instead is a so-called quasimodular form).  In contrast to the modular case, the zero set of a quasimodular form is not well-defined on $\Gamma \backslash \mathbb{H}$ (where $\Gamma$ is the relevant congruence subgroup).  This complicates matters, because there may be infinite sequences of zeros approaching the boundary of the upper half plane--there is no reduction theory that maps the zeros back to a fundamental domain for $\Gamma \backslash \mh$.

In this paper, we provide a fairly comprehensive description of the zero set of the weight $2$ Eisenstein series $E_2(z)$ defined by the Fourier expansion
\begin{equation}
\label{eq:E2def}
E_2(z)=1-24\displaystyle\sum\limits_{n=1}^\infty\sigma_1(n)e(nz).
\end{equation}
It is known that $E_2(z) = \frac{1}{2 \pi i} \frac{\Delta'}{\Delta}(z)$ (following from the product formula for $\Delta$), so the zeros of $E_2$ in $\mh$ agree with the zeros of $\Delta'$.
%This theorem has no 
%parallel with respect to quasimodular forms. In fact, very little is known about the zeros of quasimodular forms.
%The Eisenstein Series of weight 2 is a quasimodular form.  
El Basraoui and Sebbar \cite{BS} have shown that the weight $2$ Eisenstein series has infinitely
many $SL_2(\mz)$-inequivalent zeros within the strip $G = \{z \in \mh: -\tfrac12 \leq x \leq \tfrac12 \}$.
This was generalized to other quasimodular forms by 
Saber and Sebbar \cite{SaSe}; also see
Balasubramanian and Gun \cite{BG} for some related results.  El Basraoui and Sebbar \cite{BS} have also identified certain infinite families of translates of the standard fundamental domain that all contain (or do not contain, respectively) a zero of $E_2$.
% However, apart from this fact not much is known about the location
% of these zeros. This paper will further investigate various properties of these zeros and the equivariant function $h(z)$. 

\section{Background}
%To begin, we would like to familiarize the reader with some terminology.
Let $D$ denote the closure of the standard fundamental domain, i.e., 
\begin{equation}
D=\{z\in \mathbb{H} : |z| \geq 1   \text{ and }  -\tfrac{1}{2} \leq  x \leq \tfrac{1}{2}   \}.
\end{equation}
% The half-strip, denoted by $G$, is given by,
% $$G=\{ z \in \mathbb{H} : -\frac{1}{2}\leq  x  \leq \frac{1}{2}\}. $$
%Let $SL_2(\mathbb{Z})$ be the set of matrices $M = \left( \begin{smallmatrix} a&b\\ c&d \end{smallmatrix} \right)$
% where $ad-bc=1$ and $a,b,c,d \in \mathbb{Z}$
The weight $k>2$ (even) Eisenstein series for $\Gamma = PSL_2(\mz)$ is defined by
\begin{equation}
\label{eq:Eksum}
 E_k(z) = \sum_{\gamma \in \Gamma_{\infty} \backslash \Gamma} (j(\gamma, z))^{-k},
\end{equation}
where $\Gamma_{\infty}$ is the stabilizer of $\infty$, and $j(\gamma, z) = cz+d$ if $\gamma = (\begin{smallmatrix} a & b \\ c & d \end{smallmatrix})$.  It has the Fourier expansion
 \begin{equation}
 \label{eq:EkFourier}
 E_{k}(z)= 1 +\gamma_{k}  \sum_{n=1}^{\infty}
 \sigma_{k-1}(n)e(nz),
 \end{equation}
 where
 $ \gamma_{k}= -  \frac{2k}{B_k}$, 
 $B_k$  is the $k${th} Bernoulli number, and
  $\sigma_{k-1}(n) = \sum_{a |n} a^{k-1}$.  For $k=2$, the sum in \eqref{eq:Eksum} does not converge absolutely, consistent with the fact that there are no non-zero modular forms of weight $2$ for $SL_2(\mz)$.
%  When $k\geq 2$, $E_{2k}(z)$ is a \texttt{modular form} for $SL_2(\mathbb{Z})$ which means that it is holomorphic on $\mathbb{H}$, including $\infty$, and it satisfies the relations
% $$f(z)=(cz+d)^{-2k}f(\frac{az+b}{cz+d}) \text{ for all }  \begin{pmatrix} a&b\\ c&d \end{pmatrix} \in SL_2(\mathbb{Z}).$$
% This is equivalent to
%  $$f(z+1)=f(z) \quad \text{and} \quad f(\frac{-1}{z})=z^{2k}f(z).$$
%There are no non-zero modular forms of weight $2$ for $SL_2(\mathbb{Z})$.
When $k=2$, the function $E_2(z)$ defined by \eqref{eq:EkFourier}
is a quasimodular form, which satisfies the relation
\begin{equation}
\label{eq:E2relation}
 E_2\Big(\frac{az+b}{cz+d}\Big)=(cz+d)^2 E_2(z)-\tfrac{6}{\pi}ic(cz+d).
\end{equation}
See Zagier's chapter \cite{Zagier} for a proof of the transformation property of $E_2$, as well as a general definition of a quasimodular form.  Note that $E_2$ is periodic with period $1$, and for this reason we typically restrict attention to the strip $-\half \leq x \leq \half$.

\section{Numerical experiments on the zeros of $E_2(z)$}
\label{section:experiments}
We used Mathematica to numerically solve the equation $E_2(z)=0$ for $y\geq \varepsilon$ for various values of $\varepsilon$, using a truncated Fourier expansion to approximate $E_2(z)$.  This requires more than $ \varepsilon^{-1}$ terms in the Fourier expansion, so this becomes unwieldly for very small $\varepsilon > 0$.  For reasons that will be clear momentarily, it is natural to order the zeros by decreasing $y$-values.  Figure \ref{fig:Listplotofzeros} shows some zeros computed by Mathematica.

\begin{figure}[h]
\includegraphics[width=0.7\textwidth]{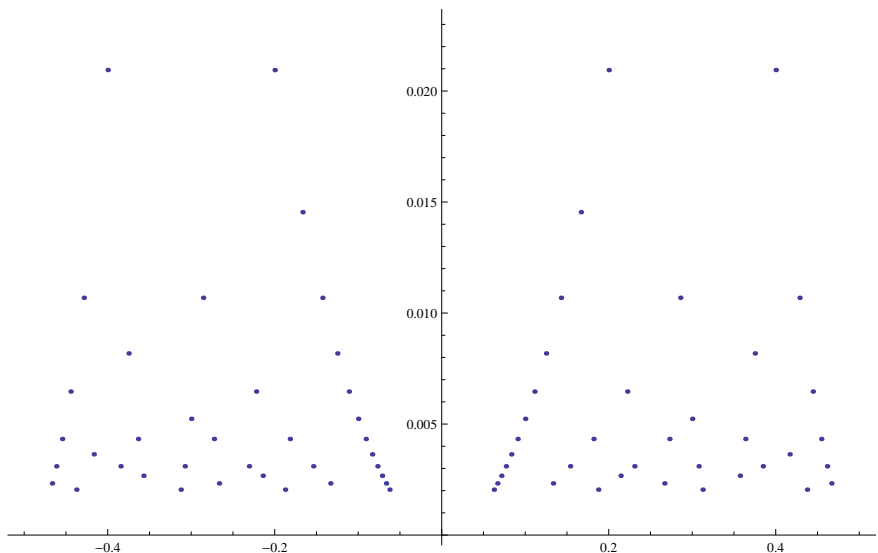}
\caption{Zeros of $E_2$ for $.002 < y <.022$}
\label{fig:Listplotofzeros}
\end{figure}

The highest zero say $z_1$ occurs at $x=0$, $y = 0.5235217000179992\dots$.  It is easy to show that there exists a unique zero on the line $x=0$ because $E_2(iy)$ is real, increasing for $y > 0$, and has limits $\lim_{y \rightarrow 0^+} E_2(iy) = -\infty$, $\lim_{y \rightarrow \infty} E_2(iy) = 1$.  

There is also a zero say $z_2$ at $x= - 1/2$, $y= 0.13091903039676245\dots$.  We now briefly prove that there indeed exists a zero with $x = - 1/2$, following \cite{BS}.  It is clear from the Fourier expansion that $E_2(- 1/2 + iy)$ is real.  Next let $\gamma = (\begin{smallmatrix} 1 & 0 \\ 2 & 1 \end{smallmatrix})$, so for $z = -\half + iy$, 
\begin{equation}
 E_2(-\tfrac12 + \tfrac{i}{4y}) = E_2\big(\frac{-\half + iy}{2iy}\big) = E_2(\gamma z) = -4 y^2 E_2(-\tfrac12 + iy) + \tfrac{24}{\pi} y.
\end{equation}
Taking $y \rightarrow \infty$, the right hand side is $\sim -4y^2 + \frac{24}{\pi} y$, which has the limit $-\infty$.  On the other hand, $\lim_{y \rightarrow 0^+} E_2(-\half + \tfrac{i}{4y}) = 1$.  Thus $E_2(-1/2 + iy)$ has a zero for $y \in \mr$ by the intermediate value theorem.  This type of method to show existence of zeros on certain vertical lines only works on $x=0$ and $x = \pm 1/2$ since $E_2(x+iy)$ is real-valued only on these lines.  Since the Fourier coefficients of $E_2$ are real-valued, if $E_2(x+iy) = 0$, then $E_2(-x+iy) = 0$, so we shall sometimes only display the zeros with $-\half < x < 0$.
% \begin{figure}[h]
% \begin{center}
%  \includegraphics{../Picture_for_paper_E2_5000.eps}
%   Picture_for_paper_E2_5000.eps: 0x0 pixel, 300dpi, 0.00x0.00 cm, bb=-0 -0 482 353
%  \caption{ $E_2(z)=0$ for $y> .001$.}
% \end{center}
% 
% 
% \end{figure}

The next few approximate zeros with $x < 0$ produced by Mathematica are:
\begin{equation}
\label{eq:zerotable}
\begin{aligned}
z_3 &= -0.33332589074451363 +  0.058181923654001474 i\\
z_4 &= -0.2499951743678368 + 0.03272491502475048 i \\
z_5^1 &= -0.19999706592873248 + 0.020942992286928155 i \\
z_5^2 &= -0.40000182048192795 +  0.020946451276672513 i.
\end{aligned}
\end{equation}
One immediately notices the striking fact that the $x$-coordinates of the zeros are quite close (but not equal) to the rational numbers $-1/3$, $-1/4$, $-1/5$, $-2/5$.  The $y$-coordinates also have a pattern:
\begin{gather*}
 \frac{\text{Im}(z_1)}{\text{Im}(z_2)} = 3.99882\dots, \quad \frac{\text{Im}(z_1)}{\text{Im}(z_3)} = 8.99801\dots, \quad 
 \frac{\text{Im}(z_1)}{\text{Im}(z_4)} = 15.9976 \dots, \\
  \frac{\text{Im}(z_1)}{\text{Im}(z_5^1)} = 24.9975 \dots, \quad 
  \frac{\text{Im}(z_1)}{\text{Im}(z_5^2)} = 24.9933 \dots,
\end{gather*}
which are all close (but not equal) to squares of integers.

Needless to say, the data suggest that these patterns continue indefinitely.
One of the main goals of this paper is to explain these patterns.  We have
\begin{mytheo}
\label{thm:zeroapprox}
 Let $-d/c \in [-1/2,1/2]$ be a rational number with $(c,d) = 1$, %set $v_0 = \frac{6}{\pi}$, 
 and 
 set
 \begin{equation}
 \widehat{z}_c^{d} = -\frac{d}{c} + \frac{i}{c^2 (6/\pi)}.  
 \end{equation}
For each such rational number, there exists a zero $z_{c}^{d}$ of $E_2$ satisfying
 \begin{equation}
  |z_c^d - \widehat{z}_c^{d}| \leq \frac{.000283}{c^2 (6/\pi)^2}.
 \end{equation}
\end{mytheo}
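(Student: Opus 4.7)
The plan is to exploit the quasimodular transformation law \eqref{eq:E2relation} to recast $E_2(z) = 0$ as $g(z) + h(z) = 0$, where $g$ is affine with unique zero at $\widehat{z}_c^d$ and $h$ is a small perturbation, then apply Rouch\'e's theorem on a small disk around $\widehat{z}_c^d$.

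First, choose $a, b \in \mz$ so that $\gamma = \bigl(\begin{smallmatrix} a & b \\ c & d \end{smallmatrix}\bigr) \in SL_2(\mz)$; different choices are related by $\gamma \mapsto T^n \gamma$ and so leave $E_2(\gamma z)$ unchanged by the periodicity of $E_2$. Rearranging \eqref{eq:E2relation}, the equation $E_2(z) = 0$ on $\mh$ is equivalent to $f(z) := E_2(\gamma z) + \tfrac{6ic}{\pi}(cz + d) = 0$. Decompose $f = g + h$ with
$$g(z) = 1 + \tfrac{6ic}{\pi}(cz + d), \qquad h(z) = E_2(\gamma z) - 1.$$
Then $g$ vanishes precisely at $\widehat{z}_c^d$ since $c\widehat{z}_c^d + d = i\pi/(6c)$, and a direct computation gives $\text{Im}(\gamma \widehat{z}_c^d) = \text{Im}(\widehat{z}_c^d)/|c\widehat{z}_c^d + d|^2 = 6/\pi$, so the Fourier expansion of $E_2$ forces $|h(\widehat{z}_c^d)|$ to be of order $24\, e^{-12}$.

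Apply Rouch\'e on the circle $|z - \widehat{z}_c^d| = r$ with $r = 0.000283/(c^2(6/\pi)^2)$. Since $g$ is affine with slope $6ic^2/\pi$, we have $|g(z)| = \tfrac{6c^2}{\pi}\, r = 0.000283\,\pi/6$ on the circle, a constant independent of $c$. The derivative $\gamma'(\widehat{z}_c^d) = -36c^2/\pi^2$ shows that, on the closed disk, $\text{Im}(\gamma z)$ differs from $6/\pi$ by at most $36 c^2 r/\pi^2 = 0.000283$ to leading order. Substituting this lower bound into the elementary tail estimate
$$|E_2(w) - 1| \leq 24 \sum_{n=1}^\infty \sigma_1(n)\, e^{-2\pi n\, \text{Im}(w)},$$
one verifies that $|h(z)| < 0.000283\,\pi/6 = |g(z)|$ on the whole circle. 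Rouch\'e then gives exactly one zero of $f$ inside the disk, and this is a genuine zero of $E_2$ because the only point where the equivalence breaks down, $z = -d/c$, lies on $\mr$ outside $\mh$.

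The principal difficulty is numerical sharpness: $|g| \approx 1.482 \times 10^{-4}$ while the dominant contribution to $|h|$ is bounded by $24\, e^{-2\pi(6/\pi - 0.000283)} \approx 1.477 \times 10^{-4}$, leaving a margin only of order $5 \times 10^{-7}$. The specific constant $0.000283$ in the theorem is therefore essentially tuned to be as small as possible while still allowing Rouch\'e to close; a rigorous proof must track both the drop in $\text{Im}(\gamma z)$ across the disk (which consumes most of the margin) and the contribution of the Fourier tail $\sum_{n \geq 2} \sigma_1(n)\, e^{-2\pi n\, \text{Im}(\gamma z)}$ (which is $O(e^{-24})$ and hence negligible). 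No conceptual obstacle arises beyond this numerical bookkeeping.
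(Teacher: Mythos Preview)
Your approach is correct and takes a genuinely different route from the paper. The paper does not use Rouch\'e; instead it introduces the equivariant function $H(z) = z + \dfrac{6/(\pi i)}{E_2(z)}$ (denoted $h$ there---not your perturbation term), which satisfies $H(\gamma z) = \gamma H(z)$. It shows (i) that $H$ takes every value in $[-1/2,1/2]$ exactly once on the fundamental domain $D$, via a boundary argument on $H(\partial D)$ together with the injectivity bound $|H'(z)-1|<1$, and (ii) that any $\tau_0 \in D$ with $H(\tau_0) \in \mr$ satisfies $|\operatorname{Im}\tau_0 - 6/\pi| < 0.000283$. Taking $H(\tau_0) = a/c$ and $z_0 = \gamma^{-1}\tau_0$, the estimate on $z_0$ then follows from the M\"obius distortion identity $\alpha u - \alpha w = (u-w)/\bigl(j(\alpha,u)\,j(\alpha,w)\bigr)$. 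Both arguments rest on the same numerical input---the size of $|E_2(w)-1|$ for $\operatorname{Im} w \approx 6/\pi$---but yours is more direct and self-contained, yielding existence and local uniqueness in one stroke, while the paper's equivariant-function machinery buys additional structure: a natural bijection between the zeros of $E_2$ and the rationals in $[-1/2,1/2]$, and a global description of the real locus of $H$. Your remark that $0.000283$ is essentially the smallest radius for which Rouch\'e closes matches the paper's observation that the constant is forced by $24\cdot\tfrac{6}{\pi}e^{-12}\approx 0.000282$; the numerics do close with a margin of order $10^{-7}$ once one controls the second-order M\"obius distortion (a factor $1/(1-cr\cdot 6c/\pi)=1+O(10^{-4})$ in the bound for $|\gamma z - \gamma\widehat z|$) and the $n\ge 2$ Fourier tail (of order $e^{-24}$), exactly as you anticipate.
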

For ease of comparison, we have
$\widehat{z}_1^0 = 0 + .523599 i$, $\widehat{z}_2^1 = -\tfrac12 + .130899 i$, $\widehat{z}_3^1 = -\tfrac13 + .0581776 i$, 
 $\widehat{z}_4^1 = -\tfrac14 + .0327249 i$, $\widehat{z}_5^1 = -\tfrac15 + .020944 i$,  $\widehat{z}_5^2 = -\tfrac25 + .020944 i$, etc., which explains the numerical patterns noticed above.  Furthermore, the points $\widehat{z}_c^d$ with $d$ fixed and varying $c$ lie on the parabola $y = \frac{\pi}{6 d^2} x^2$, which explains the apparent families of curves appearing in Figure \ref{fig:Listplotofzeros}.

In Theorem \ref{thm:locationofzeros} below we show in a more precise sense that the zeros of $E_2$ are naturally in one-to-one correspondence with the rational numbers.
 
% In fact, we have a more accurate approximation given in
% Theorem \ref{thm:locationofzeros2} below.

After writing this paper, we learned that Imamo{\=g}lu, Jermann, and T{\'o}th \cite{IJT} simultaneously and independently have obtained results substantially the same as our Theorem \ref{thm:zeroapprox}.

\section{An equivariant function}
Our main tool for understanding the zeros of $E_2$ is an auxiliary function $h(z)$ defined by
\begin{equation}
 h(z)=z+\frac{6/\pi i}{E_2(z)}.
\end{equation}
This function $h(z)$ is \emph{equivariant}, which means that
if $z \in \mathbb{H}$ and $\gamma \in SL_2(\mathbb{Z})$, then
\begin{equation}
\label{eq:hequivariant}
h(\gamma z)=\gamma h(z).
\end{equation}
See Sebbar-Sebbar \cite{SeSe} for some properties of $h$, especially their Theorem 5.3 which proves \eqref{eq:hequivariant}.  Note that $h(z_0) = \infty$ is equivalent to $E_2(z_0) = 0$.  El Basraoui and Sebbar \cite{BS} originally used properties of $\frac{1}{h(z)}$ to show that $E_2(z)$ has infinitely many $\Gamma$-inequivalent zeros in the strip $G$.  We include a brief proof of \eqref{eq:hequivariant} since it is not a well-known property.
\begin{proof}[Proof of \eqref{eq:hequivariant}]
 Let
 $\gamma = (\begin{smallmatrix} a & b \\ c & d \end{smallmatrix}) \in SL_2(\mathbb{Z})$ and $z \in \mh$.
 By \eqref{eq:E2relation}, we have
% $$h(\alpha z)=\frac{az+b}{cz+d} + \frac{6}{\pi i [(cz+d)^2 E_2(z)+\frac{6}{\pi i}c(cz+d)]}.$$
 \begin{equation}
  h(\gamma z) = \frac{1}{cz + d} \Big[ az + b + \frac{\frac{6}{\pi i}}{(cz + d) E_2(z) + \frac{6}{\pi i} c} \Big].
 \end{equation}
Dividing both the numerator and denominator of the second term by $E_2(z)$ and forming a common denominator, we obtain
\begin{equation}
 h(\gamma z) = \frac{1}{cz+d}\left[\frac{(az+b) (cz+d+\frac{6c}{\pi i E_2(z)})+\frac{6}{\pi i E_2(z)}}{(cz+d) + \frac{6c}{\pi i E_2(z)}}\right].
\end{equation}
Using $bc + 1= ad$ in the numerator, and factoring out $cz+d$ from the numerator, we have
\begin{equation}
 h(\gamma z) = \frac{az + b +  a\frac{6}{\pi i E_2(z)}}{cz + d + c\frac{6}{\pi i E_2(z)}} = \gamma h(z). \qedhere
\end{equation}
%  
%  Recall, since $\alpha \in SL_2(\mathbb{Z}), \text{ ad}-\text{bc}=1.$
%  $$h(\alpha z)=\frac{1}{cz+d}\biggl[\frac{(az+b)(cz+d) + (az+b)[\frac{6c}{\pi i E_2(z)}]+\frac{6}{\pi i E_2(z)}(ad-bc)}{(cz+d) +\frac{6c}{\pi i E_2(z)}}\biggl]$$
%  
%  
%  $$h(\alpha z)=\frac{1}{cz+d}\biggl[\frac{(az+b)(cz+d)+a(cz+d)\frac{6}{\pi i E_2(z)}}{(cz+d) +\frac{6c}{\pi i E_2(z)}}\biggl]$$
%  $$h(\alpha z)=\frac{(az+b)+a(\frac{6}{\pi i E_2(z)})}{(cz+d)+ c(\frac{6}{\pi i E_2(z)})}$$
%  
%  $$h(\alpha z)=\frac{a(z+\frac{6}{\pi i E_2(z)}) +b}{c(z+\frac{6}{\pi i E_2(z)})+d}=\frac{a(h(z))+b}{c(h(z))+d}=\alpha h(z)$$
%  
\end{proof}

Next we state a variation of Lemma 3.4 of Balasubramanian-Gun \cite{BG}, who worked instead with $g(z) = 1/h(z)$.
\begin{myprop}[\cite{BG}]
\label{prop:E2zeroshrationals}
If $E_2(z_0) = 0$ then $h(\gamma z_0) = \frac{a}{c}$ for $\gamma = (\begin{smallmatrix} a & b \\ c & d \end{smallmatrix})$.  Conversely, if $h(\tau_0) = \frac{a}{c}$ with coprime $a, c$, then $E_2(\gamma^{-1} \tau_0) = 0$ for $\gamma = (\begin{smallmatrix} a & b \\ c & d \end{smallmatrix})$.
\end{myprop}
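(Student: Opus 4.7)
The plan is to combine two facts already in hand: the equivariance identity $h(\gamma z)=\gamma h(z)$ from \eqref{eq:hequivariant}, and the observation (immediate from the definition of $h$) that $h(z_0)=\infty$ if and only if $E_2(z_0)=0$. The additional elementary input is the evaluation of the Möbius action on the boundary point: for $\gamma=(\begin{smallmatrix} a & b \\ c & d \end{smallmatrix})\in SL_2(\mz)$, the fractional linear transformation sends $\infty \mapsto a/c$. Together these make both directions of the proposition essentially formal.

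For the forward implication, I would start from $E_2(z_0)=0$, which gives $h(z_0)=\infty$. Then the equivariance \eqref{eq:hequivariant} yields $h(\gamma z_0)=\gamma h(z_0)=\gamma\cdot\infty=a/c$, which is exactly the desired conclusion.

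For the converse, I would begin with $h(\tau_0)=a/c$ where $\gcd(a,c)=1$. By Bezout, one can choose $b,d\in\mz$ with $ad-bc=1$, producing a matrix $\gamma=(\begin{smallmatrix} a & b \\ c & d \end{smallmatrix})\in SL_2(\mz)$ for which $\gamma\cdot\infty=a/c=h(\tau_0)$. Applying the equivariance to $\gamma^{-1}$ gives $h(\gamma^{-1}\tau_0)=\gamma^{-1}h(\tau_0)=\gamma^{-1}(\gamma\cdot\infty)=\infty$, and hence $E_2(\gamma^{-1}\tau_0)=0$.

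The only point requiring a little care is the handling of the value $\infty$ in the equivariance identity, which was stated for $z\in\mh$ and finite-valued $h$. One clean way to manage this is to work with $g=1/h$ throughout (as in Balasubramanian--Gun), so that $E_2(z_0)=0$ corresponds to the honest value $g(z_0)=0$, and the equivariance becomes a transformation law for $g$ that keeps every expression finite; alternatively, one reads both sides as maps into the Riemann sphere $\mathbb{P}^1(\mc)$ and invokes continuity at the pole. Either way, no real difficulty arises, and I expect this to be the sole technical subtlety of the argument.
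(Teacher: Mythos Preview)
Your proposal is correct and follows essentially the same route as the paper's proof: both directions are obtained by combining the equivariance $h(\gamma z)=\gamma h(z)$ with the identification $h(z_0)=\infty \Leftrightarrow E_2(z_0)=0$ and the fact that $\gamma\cdot\infty=a/c$. Your additional remarks about Bezout and about handling the value $\infty$ (via $g=1/h$ or the Riemann sphere) are reasonable clarifications but are not needed beyond what the paper does.
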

 \begin{proof}
  Consider the case when $E_2(z_0)=0$ (so $h(z_0) = \infty$), and let $z=\gamma z_0$.  Note that $\gamma \infty= \frac{a}{c}$.
Then
  $$ h(\gamma z_0)=\gamma h(z_0)= \gamma \infty = \frac{a}{c}.$$
Conversely, suppose $h(\tau_0)=\frac{a}{c}$.  Then
\begin{equation}
 h(\gamma^{-1} \tau_0) = \gamma^{-1} h(\tau_0) = \gamma^{-1} \frac{a}{c} = \infty,
\end{equation}
so $E_2(\gamma^{-1} \tau_0) = 0$.
 \end{proof}

 \section{The real locus of $h$}
Let $\mh^* = \mh \cup \{ \infty \}$, $\mr^* = \mr \cup \{\infty \}$, and $\mq^* = \mq \cup \{ \infty \}$.  Then define
\begin{equation}
\mathcal{Q} = \{ z \in \mh^* : h(z) \in \mq^* \}, \quad \text{and} \quad \mathcal{R} = \{ z \in \mh^* : h(z) \in \mr^* \}.
\end{equation}
In other words, $\mathcal{Q} = h^{-1}(\mq^*)$, $\mathcal{R} = h^{-1}(\mr^*)$.  It follows easily from the definition of an equivariant function that $SL_2(\mz)$ acts on $\mathcal{Q}$ as well as $\mathcal{R}$.  Therefore, $\mathcal{Q}_D := \mathcal{Q} \cap D$ and $\mathcal{R}_D := \mathcal{R} \cap D$ have the properties that $SL_2(\mz) \mathcal{Q}_D = \mathcal{Q}$ and $SL_2(\mz) \mathcal{R}_D = \mathcal{R}$, so that to understand the shapes of $\mathcal{Q}$ and $\mathcal{R}$ it suffices to consider $\mathcal{Q}_D$ and $\mathcal{R}_D$.  Of course, $\mathcal{Q}_D \subset \mathcal{R}_D$, and as the latter set is simpler, we now focus on describing $\mathcal{R}_D$.  Numerically plotting the set $\mathcal{R}_D$ using Mathematica produces Figure \ref{fig:RDset}.  The left-hand image has the bottom arc $|z| = 1$ for reference, while the right-hand image is on a much finer scale.
\begin{figure}[h]
\includegraphics[width=0.4\textwidth]{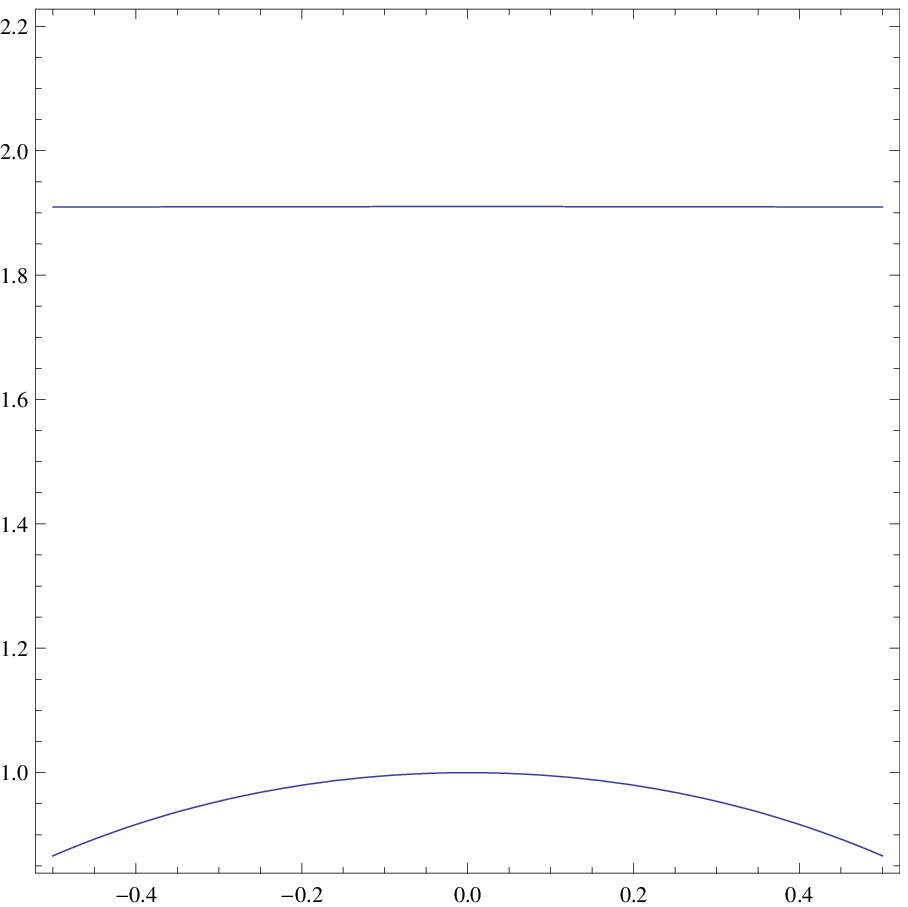}
\includegraphics[width=0.415\textwidth]{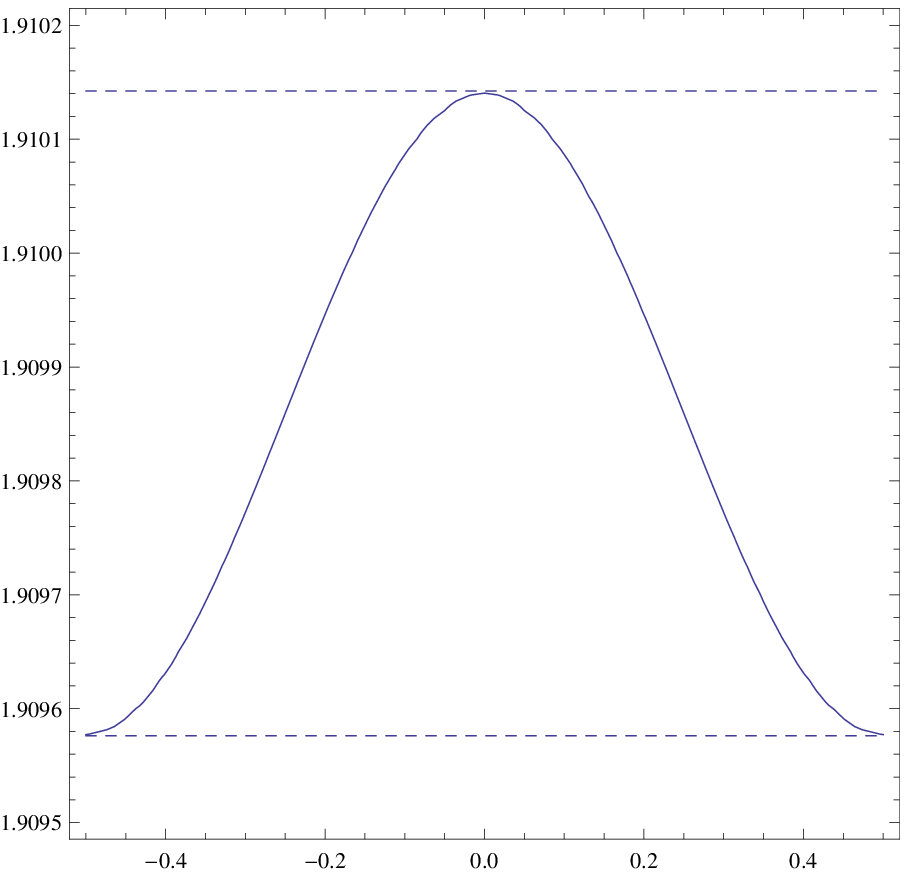}
\caption{The set $\mathcal{R}_D$}
\label{fig:RDset}
\end{figure}

\begin{mytheo}
\label{thm:RDsmallstrip}
The set $\mathcal{R}_D$ is contained in the small strip $|y- 6/\pi| < .000283$.
\end{mytheo}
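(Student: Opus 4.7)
The plan is to convert the condition $z\in\mathcal{R}_D$ into an explicit inequality on $y=\operatorname{Im}(z)$ and then bootstrap. First I would expand $h(z)=z-6i/(\pi E_2(z))$ and split $E_2=u+iv$ to compute
\[
\operatorname{Im}(h(z))=y-\frac{6}{\pi}\operatorname{Re}\bigl(1/E_2(z)\bigr).
\]
The condition $z\in\mathcal{R}_D$ is therefore equivalent to $y=(6/\pi)\operatorname{Re}(1/E_2(z))$, which in turn (writing $1/E_2=1-(E_2-1)/E_2$ and using $|\operatorname{Re}(\cdot)|\leq|\cdot|$) yields the working bound
\[
\bigl|y-\tfrac{6}{\pi}\bigr|\leq\frac{6}{\pi}\cdot\frac{|E_2(z)-1|}{|E_2(z)|}.
\]

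Next I would estimate the right-hand side via the Fourier expansion \eqref{eq:E2def}: for any $z\in D$ one has $y\geq\sqrt{3}/2$, so
\[
|E_2(z)-1|\leq\alpha(y):=24\sum_{n\geq 1}\sigma_1(n)e^{-2\pi ny},
\]
a strictly decreasing function of $y$ dominated by its $n=1$ term $24e^{-2\pi y}$. Since $\alpha(\sqrt{3}/2)\approx 0.105<1$, one has $|E_2(z)|\geq 1-\alpha(y)>0$ on $D$ (in particular $E_2$ does not vanish there), giving the self-referential bound
\[
\bigl|y-\tfrac{6}{\pi}\bigr|\leq\beta(y):=\frac{6\,\alpha(y)}{\pi(1-\alpha(y))},
\]
with $\beta$ strictly decreasing in $y$.

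The final step is to solve this implicit inequality on $y\in[\sqrt{3}/2,\infty)$. The sharp numerical input is $\alpha(6/\pi)=24e^{-12}+O(e^{-24})\approx 1.475\times 10^{-4}$, giving $\beta(6/\pi)<.000283$. For $y\geq 6/\pi$, monotonicity immediately yields $y-6/\pi\leq\beta(y)\leq\beta(6/\pi)<.000283$. For $y<6/\pi$, I would set $\delta=6/\pi-y$ and show that $H(\delta):=\delta-\beta(6/\pi-\delta)$ is strictly positive on $[.000283,\,6/\pi-\sqrt{3}/2]$. On $[0.23,\,6/\pi-\sqrt{3}/2]$ this is automatic from the crude bound $\beta(6/\pi-\delta)\leq\beta(\sqrt{3}/2)\approx 0.224<0.23$; on the delicate range $[.000283,\,0.23]$ the estimate $|\beta'(w)|\leq(6/\pi)\cdot 48\pi e^{-2\pi w}(1+O(e^{-2\pi w}))/(1-\alpha(w))^2\leq 10^{-2}$ for $w\in[1.68,\,6/\pi]$ shows that $H'(\delta)=1+\beta'(6/\pi-\delta)$ is close to $1$, so $H$ is strictly increasing, and $H(.000283)>0$ follows from the same dominant Fourier computation.

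The main obstacle will be that the constant $.000283$ is numerically sharp -- it matches $\beta(6/\pi)$ to within roundoff -- so the proof cannot afford lossy estimates and must retain both the $1/(1-\alpha)$ correction and explicit tail bounds for the $n\geq 2$ terms of $\alpha(6/\pi)$. In addition, the $y<6/\pi$ direction genuinely requires the bootstrap step above, since the bound on $|E_2(z)-1|$ itself depends on $y$.
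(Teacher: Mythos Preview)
Your proposal is correct and follows essentially the same route as the paper: derive the bound $|y-6/\pi|\leq \tfrac{6}{\pi}\cdot\tfrac{|E_2(z)-1|}{1-|E_2(z)-1|}$ from the identity for $h(z)-(z-6i/\pi)$, then bootstrap using the Fourier tail estimate for $|E_2(z)-1|$. The only cosmetic difference is that the paper carries out the bootstrap as three or four explicit numerical iterations (crude bound $\Rightarrow$ $|y-6/\pi|<.32$ $\Rightarrow$ sharper $|E_2-1|$ $\Rightarrow$ $\dots$), whereas you package the same recursion as a monotonicity argument for $H(\delta)=\delta-\beta(6/\pi-\delta)$; both reach the constant $.000283$ by the same mechanism.
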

That is, the set of $z \in D$ such that $h(z) \in \mr$ lies between two horocycles at heights $y =6/\pi \pm .000283$ ($y=1.90957\dots$ and $y=1.91014\dots$) which are shown as dotted lines in the right-hand image in Figure \ref{fig:RDset}.  Extending these two horocycles to $\mh$ by translating by all integers (that is, by the subgroup $\Gamma_{\infty}$) gives two circles around the point $\infty$.  Call the top circle $C_+$ and the bottom one $C_{-}$.  
The images of $C_{\pm}$ under $\gamma \in SL_2(\mz)$ are circles tangent to $\mr^*$ at $\gamma(\infty) \in \mq^*$.  
Thus $\gamma (\Gamma_{\infty} \mathcal{R}_D) = \gamma(\mz + \mathcal{R}_D)$ is squeezed between the two circles $\gamma C_{\pm}$ meeting at $\gamma(\infty) \in \mq^*$.  To the naked eye, the two circles $C_{+}$ and $C_{-}$ are almost indistinguishable, so that $\mathcal{R}$ appears to be a union of circles tangent to the rationals $-d/c$ with radii $\approx \frac12 \frac{\pi}{6 c^2}$.  For this, one calculates $\text{Im}(\gamma(x + \frac{6i}{\pi})) = \frac{6/\pi}{(6 c/\pi)^2 + (cx + d)^2}$, where $\gamma = (\begin{smallmatrix} a& b\\ c& d\end{smallmatrix})$, and this imaginary part is maximized when $cx + d= 0$, which gives the peak of the circle.

From Figure \ref{fig:RDset}, one can see that the constant $.000283$ is quite sharp.  This numerical constant arises because the first non-constant term in the Fourier expansion for $E_2$ gives $24 \frac{6}{\pi} e^{-2 \pi \frac{6}{\pi}} = .00028163\dots$.

% [Talk about how this explains the circles in the picture: The ``almost-circular'' shapes in Figures ?? and ?? can be shown to be nearly circlular.  
% By applying all the elements of $SL_2(\mathbb{Z})$ to the curve that satisfies $\text{Im}(h(z))=0$ in $\mathbb{D}$, our 
% resulting images are the nearly circular shapes that we see below $\mathbb{D}$. Therefore, our curve in $\mathbb{D}$ is the 
% generating curve for all the solution curves in $\mathbb{H}$.
% The curve generated by $\text{Im}(h(z))=0$ in $\mathbb{D}$ can be bounded above and below by straight lines. 
% By this fact, we know the curves below $\mathbb{D}$ are very close to circles. 
% Because as $SL_2(\mathbb{Z})$ translates these 2 lines and our curve, the image is 
% two perfect circles with the translated curve sitting in-between. 

 %-24(3)e^{4 \pi i x}e^{-4 \pi y}-24(4)e^{6 \pi i x}e^{-6 \pi y}-24(7)e^{8 \pi i x}e^{-8 \pi y}-24\displaystyle\sum\limits_{n=5}^\infty\sigma_1(n)e^{2\pi i n z}]}$

%\begin{figure}[]
 %\input{Picture 55.jpeg}
%\includegraphics{Picture 55.jpeg}
 %\caption{}
% \label{fig:}
 %\end{figure}]

\begin{mylemma}
\label{lemma:E2bound}
Suppose $y \geq \frac{\sqrt{3}}{2}$ and $N \geq 1$ is an integer.  Then
\begin{equation}
\label{eq:E2tailbound}
|E_2(z) - [1 - 24\sum_{n \leq N} \sigma_1(n) e(nz)]| \leq 24e^{-2\pi Ny} \Big(\frac{N^2}{2\pi y} + \frac{2N}{(2 \pi y)^2} + \frac{2}{(2 \pi y)^3}\Big).
\end{equation}
This bound is decreasing separately in $N \geq 1$ or $y \geq \frac{\sqrt{3}}{2}$.  Furthermore,
\begin{equation}
\label{eq:E2tailbound2}
|E_2(z) - 1| \leq 24 e^{-2 \pi y} \Big(1 + \frac{1}{2 \pi y} +  \frac{2}{(2 \pi y)^2} + \frac{2}{(2 \pi y)^3}\Big) < .14.
\end{equation}
\end{mylemma}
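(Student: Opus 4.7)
\emph{Proof plan.} The plan is to reduce the claim to an elementary estimate for the tail of the Fourier series of $E_2$, and then evaluate the dominating integral exactly by integration by parts. By \eqref{eq:E2def}, the left-hand side of \eqref{eq:E2tailbound} equals $\bigl|24\sum_{n>N}\sigma_1(n)e(nz)\bigr|$. Applying the triangle inequality together with $|e(nz)|=e^{-2\pi ny}$ and the crude divisor bound $\sigma_1(n)\le n^2$ (valid for all $n\ge 1$ since $n$ has at most $n$ divisors, each at most $n$) gives the upper bound $24\sum_{n>N}n^2 e^{-2\pi ny}$.

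The main analytic step is to dominate this sum by $24\int_N^\infty x^2 e^{-2\pi xy}\,dx$. This comparison is legitimate provided $f(x)=x^2 e^{-2\pi xy}$ is monotonically decreasing on $[N,\infty)$, and a quick differentiation shows $f'(x)\le 0$ precisely when $x\ge 1/(\pi y)$. Under the hypotheses $N\ge 1$ and $y\ge\sqrt{3}/2$ we have $N\pi y\ge \pi\sqrt{3}/2>1$, so monotonicity holds throughout. Two rounds of integration by parts then evaluate
\begin{equation*}
\int_N^\infty x^2 e^{-2\pi xy}\,dx = e^{-2\pi Ny}\left[\frac{N^2}{2\pi y}+\frac{2N}{(2\pi y)^2}+\frac{2}{(2\pi y)^3}\right],
\end{equation*}
which is exactly \eqref{eq:E2tailbound}. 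The asserted monotonicity of this bound in $N\ge 1$ and in $y\ge\sqrt{3}/2$ is then immediate from the integral representation: increasing $N$ shrinks the domain, and increasing $y$ strictly decreases the positive integrand pointwise.

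For \eqref{eq:E2tailbound2} the integral comparison starting at $0$ is not available because $x^2 e^{-2\pi xy}$ is not monotone near $0$, so I would peel off the $n=1$ contribution (giving $24e^{-2\pi y}$) and bound the remainder $24\sum_{n\ge2}n^2 e^{-2\pi ny}$ by $24\int_1^\infty x^2 e^{-2\pi xy}\,dx$ via the same integration-by-parts formula; assembling the pieces yields the claimed bracketed expression. The numerical inequality $<.14$ then follows by substituting the extreme value $y=\sqrt{3}/2$ (using the established monotonicity in $y$) and computing. There is no conceptually difficult step here; the only point requiring attention is the monotonicity check that licenses the integral comparison, and it is precisely this that forces the hypothesis $y\ge\sqrt{3}/2$.
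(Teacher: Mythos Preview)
Your proof is correct and follows essentially the same route as the paper: bound $\sigma_1(n)\le n^2$, verify that $x^2 e^{-2\pi xy}$ is decreasing on $[1,\infty)$ under the hypothesis on $y$, apply the integral test, and integrate by parts; for \eqref{eq:E2tailbound2} peel off the $n=1$ term and invoke the $N=1$ case of \eqref{eq:E2tailbound}. One small inaccuracy in your commentary: the monotonicity check only needs $y>1/\pi$, so it is the numerical inequality $<.14$ (evaluated at $y=\sqrt{3}/2$), not the integral comparison, that actually pins the hypothesis $y\ge\sqrt{3}/2$.
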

\begin{proof}
Let $r = e^{-2 \pi y}$, so in particular $r < .0044$.  Then the tail of the Fourier series for $E_2$ is bounded by $24 \sum_{n > N} \sigma_1(n) r^n$.  Obviously, $\sigma_1(n) \leq n^2$, and an exercise in calculus shows that $f_r(t) = t^2 r^t$ is monotone decreasing for $t \geq 1$ since $\log r < -\pi \sqrt{3} <-2$.  Therefore, the integral test gives
\begin{equation}
\label{eq:E2tailboundproof}
\sum_{n > N} n^2 r^n \leq  \int_N^{\infty} t^2 r^t dt = r^N \Big(\frac{N^2}{\log(1/r)} + \frac{2N}{\log^2(1/r)} + \frac{2}{\log^3(1/r)}\Big). 
\end{equation}
Re-writing this bound in terms of $r$ gives \eqref{eq:E2tailbound}.  The fact that the bound is decreasing in either $N$ or $y$ is evident from the integral formula in \eqref{eq:E2tailboundproof}.  We derive \eqref{eq:E2tailbound2} by choosing $N=1$ and directly bounding the $N=1$ term.
\end{proof}

\begin{proof}[Proof of Theorem \ref{thm:RDsmallstrip}]
Let $z \in D$, so $y \geq \frac{\sqrt{3}}{2}$.
Using \eqref{eq:E2tailbound2}, we have
\begin{equation}
h(z)= z + \frac{\frac{6}{\pi i}}{1 + (E_2(z) - 1)},
\end{equation}
which gives by a rearrangement
\begin{equation}
 h(z) - \big(z + \frac{6}{\pi i}\big) = \frac{6}{\pi i} \frac{1 - E_2(z)}{1 + (E_2(z) - 1)}.
\end{equation}
Therefore, for any $z \in D$ we have
% \begin{equation}
%  \big|y-\frac{6}{\pi}\big| = \frac{6}{\pi} \frac{|1-E_2(z)|}{|E_2(z)|}.
% \end{equation}
%and using $|\frac{1}{1 + s} - 1| \leq \frac{|s|}{1-|s|}$ (for $|s| < 1$), we deduce
\begin{equation}
\label{eq:hbound}
\Big|h(z) - \big(z + \frac{6}{\pi i}\big)\Big| \leq \frac{6}{\pi} \frac{|E_2(z) - 1|}{1 - |E_2(z) - 1|}.
\end{equation}
If $h(z) \in \mr$, we then obtain
\begin{equation}
\label{eq:ybound}
 \Big|y - \frac{6}{\pi}\Big| \leq \frac{6}{\pi} \frac{|E_2(z) - 1|}{1 - |E_2(z) - 1|}.
\end{equation}
To start a recursive process, we use the crude bound $|E_2(z) - 1| < .14$ (from \eqref{eq:E2tailbound2}) which holds throughout the fundamental domain, obtaining that
\begin{equation}
%|h(z) - (z + \tfrac{6}{\pi i})| 
|y - \frac{6}{\pi}| < \frac{6}{\pi} \frac{.14}{.86} < .32.
\end{equation}
This is weaker than the statement of Theorem \ref{thm:RDsmallstrip}, but we shall iterate this to improve on the bounds.  Using
$y \geq \frac{6}{\pi} - .32$ in \eqref{eq:E2tailbound2}, we obtain $|E_2(z) - 1| <.0013\dots$ which we can re-insert into \eqref{eq:ybound}, giving the improved estimate $|y-\frac{6}{\pi}| < .0024$.  Using this iteration once more leads to $|E_2(z) - 1| < .00017$, and $|y-\frac{6}{\pi}| < .00032$.  At this point we take $N = 4$ and use
\begin{equation}
 |E_2(z) - 1| \leq 24 \Big(\sum_{n \leq 4} \sigma_1(n) e^{-2\pi ny} + e^{-8\pi y}\Big(\frac{4^2}{2\pi y} + \frac{8}{(2 \pi y)^2} + \frac{2}{(2 \pi y)^3} \Big) \Big),
\end{equation}
which for $y > \frac{6}{\pi} - .00032$ gives $|E_2(z) - 1| < .00015$, and finally $|y-\frac{6}{\pi}| < .000283$.  
\end{proof}

\section{The rational values of $h$}
% These graphs prompted us to ask the question: What would happen if we transformed the zeros of $E_2$ into $\mathbb{D}$? 
% Recall, $E_2(z)$ has no zeros in $\mathbb{D}$.  However, we can translate the coordinates of each of our zeros back into 
% $\mathbb{D} \text{ by applying different elements of } SL_2(\mathbb{Z})$ to each individual point.  Figure 4.4 shows a sample 
% of some of our zeros of $E_2(z)$ which have been translated back into $\mathbb{D}$.  When we show the curve of $\text{Im}(h(z))=0$
% which is in $\mathbb{D}$ along with our translated zeros, we have Figure 4.5.  The fact that these lie on the same curve 
% is a expected consequence of Theorem 4.1 and the fact that the curve of $\text{Im}(h(z))=0$ in $\mathbb{D}$ is the 
% generating curve for all values of curve of $\text{Im}(h(z))=0$. Indeed, if we translated all of the zeros of $E_2(z)$ back into 
% $\mathbb{D}$, the resulting image would be the same as Figure 4.3 (i.e. they would trace out the curve of $\text{Im}(h(z))=0$ in $\mathbb{D}$). 

% Therefore, by Theorem 4.1 and Theorem 4.2, we can conclude that all of the zeros of $E_2(z)$ can be translated
% back into $\mathbb{D}$ and lie on the curve bounded above and below by 
% $$|y-\frac{6}{\pi}| \leq \frac{24*6}{\pi} e^{-12} \approx .00028 \text{ . }$$

In light of Proposition \ref{prop:E2zeroshrationals}, we now turn to the rational values of $h$ as a means to understand the location of the zeros of $E_2$.  Recall that each zero of $E_2$ is $SL_2(\mz)$-equivalent to a rational value of $h$.  Figure \ref{fig:RDsetWithzeros} shows the points in the fundamental domain equivalent to the zeros of $E_2$ with $y > .002$
\begin{figure}[h]
\includegraphics[width=0.65\textwidth]{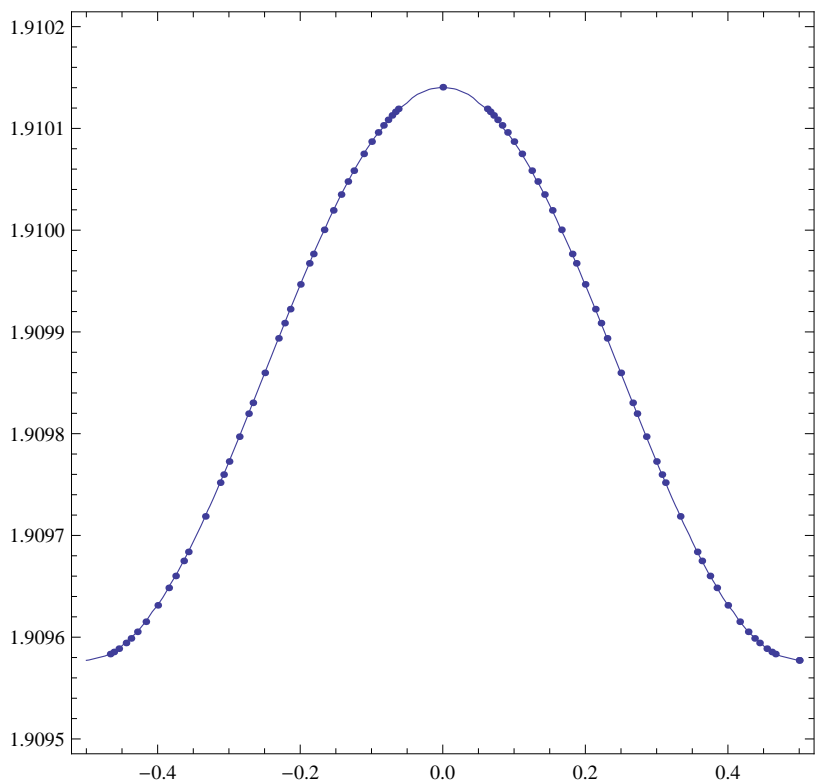}
\caption{Rational values of $h$ with small denominators overlaid on $\mathcal{R}_D$}
\label{fig:RDsetWithzeros}
\end{figure}

\begin{myprop}
 The function $h$ restricted to the region $\{ z \in \mh : y > .95\}$ is injective and takes every real value in the interval $[-1/2, 1/2]$ exactly once.  In particular, $h$ takes every rational value in $[-1/2, 1/2]$ exactly once.
\end{myprop}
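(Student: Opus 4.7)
The plan is to establish three facts about $h$ on $H := \{z \in \mathbb{H} : y > .95\}$: that $h$ is holomorphic, that $h$ is injective, and that for each $t \in [-1/2, 1/2]$ there is at least one $z \in H$ with $h(z) = t$. Holomorphy is immediate from Lemma \ref{lemma:E2bound}: the bound $|E_2(z) - 1| < .14$ for $y \geq \sqrt{3}/2$ shows $E_2$ is non-vanishing on $H$.

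For injectivity, set $g(z) = h(z) - z = \frac{6/(\pi i)}{E_2(z)}$ and aim to show $\sup_{z \in H}|g'(z)| < 1$. Term-by-term differentiation of \eqref{eq:E2def} gives $E_2'(z) = -48\pi i \sum_n n\sigma_1(n) e(nz)$, and an integral test mimicking the proof of Lemma \ref{lemma:E2bound} yields a bound of the form $|E_2'(z)| \leq 48\pi e^{-2\pi y}\bigl(1 + O(y^{-1})\bigr)$. Combined with the numerical lower bound $|E_2(z)| > .92$ at $y = .95$ (itself from Lemma \ref{lemma:E2bound}), this gives $|g'(z)| = \frac{6}{\pi}|E_2'(z)|/|E_2(z)|^2 < 1$ throughout $H$, with some margin. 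Since $H$ is convex, the mean-value inequality for holomorphic functions then yields $|g(z_1) - g(z_2)| < |z_1 - z_2|$ for distinct $z_1, z_2 \in H$; combined with the identity $h(z_1) - h(z_2) = (z_1 - z_2) + (g(z_1) - g(z_2))$, this rules out $h(z_1) = h(z_2)$.

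For the surjectivity onto $[-1/2, 1/2]$, I would exploit two symmetries of $h$: the equivariance under $\bigl(\begin{smallmatrix} 1 & 1 \\ 0 & 1\end{smallmatrix}\bigr)$ gives $h(z+1) = h(z)+1$, and the reality of the Fourier coefficients of $E_2$ gives $\overline{E_2(z)} = E_2(-\bar z)$, whence $h(-\bar z) = -\overline{h(z)}$. Combining the two yields $\mathrm{Re}\,h(\pm\tfrac12 + iy) = \pm\tfrac12$ identically. For each fixed $c \in [-1/2, 1/2]$, the real-valued function $y \mapsto \mathrm{Im}\,h(c+iy) = y + \mathrm{Im}\,g(c+iy)$ is strictly increasing, by the Cauchy--Riemann equations combined with $|g'| < 1$; it is negative at $y = .95$ (by \eqref{eq:hbound}, since $\mathrm{Im}\,h \approx y - 6/\pi$ and $.95 - 6/\pi < 0$) and tends to $+\infty$ as $y \to \infty$, so there is a unique $y^*(c) > .95$ with $h(c + iy^*(c)) \in \mathbb{R}$. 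The map $\phi(c) := \mathrm{Re}\,h(c + iy^*(c))$ is continuous by the implicit function theorem, satisfies $\phi(\pm\tfrac12) = \pm\tfrac12$, and by the intermediate value theorem its image covers $[-1/2, 1/2]$; combined with the injectivity of $h$, this gives exactly one preimage in $H$ for each $t \in [-1/2, 1/2]$.

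The main technical obstacle is securing the uniform derivative bound $|g'(z)| < 1$ on $H$: this pins down the threshold $.95$, because the simple tail estimates show that $|g'|$ is essentially a large constant times $e^{-2\pi y}$, and one must check that $y = .95$ lies safely below the point where this quantity crosses $1$. The remaining ingredients are classical (Cauchy--Riemann, the implicit function theorem, and the intermediate value theorem) together with the two explicit symmetries of $h$.
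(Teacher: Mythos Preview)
Your injectivity argument is essentially the paper's: both prove $|h'(z)-1|<1$ for $y\ge .95$ by bounding $|E_2'|$ above and $|E_2|$ below, and then use convexity of the region to deduce injectivity (your mean-value inequality for $g=h-\mathrm{id}$ is exactly the paper's integral estimate for $f=h$).

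Your surjectivity argument, however, is genuinely different from the paper's and in some ways cleaner. The paper works on the full fundamental domain $D$: it identifies $h(\pm\tfrac12+iy)$ as lying on the vertical lines $\mathrm{Re}=\pm\tfrac12$, computes $h$ at the elliptic points $\rho,i,1-\overline{\rho}$ via the fixed-point equivariance, and then \emph{numerically} evaluates $h$ along the bottom arc $|z|=1$ to conclude (from the resulting picture of $h(\partial D)$) that $h$ hits every real value in $[-\tfrac12,\tfrac12]$. You instead stay entirely in the half-plane $y>.95$: for each fixed real part $c$ you show $y\mapsto\mathrm{Im}\,h(c+iy)$ is strictly increasing (via Cauchy--Riemann and $|g'|<1$), negative near $y=.95$, and $\to+\infty$, so there is a unique height $y^*(c)$ where $h$ is real; then the continuous map $c\mapsto\mathrm{Re}\,h(c+iy^*(c))$ connects $-\tfrac12$ to $\tfrac12$ by the intermediate value theorem. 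Your route avoids both the elliptic-point computation and the numerical arc evaluation, and makes the existence of real preimages fully rigorous; the paper's route has the compensating advantage of describing the entire image $h(D)$ (its Figure of the boundary curve), which is geometrically informative but not strictly needed for the proposition.
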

It would be interesting to determine if $h$ is injective on the entire fundamental domain $D$.  For our purposes, we were interested in the real values which occur for $y \approx 6/\pi$ so the behavior at the bottom of the fundamental domain was not relevant. 
\begin{proof}
See Figure \ref{fig:himage} for a picture of the image of the fundamental domain $D$ under $h$. 
\begin{figure}[h]
\includegraphics[width=0.25\textwidth]{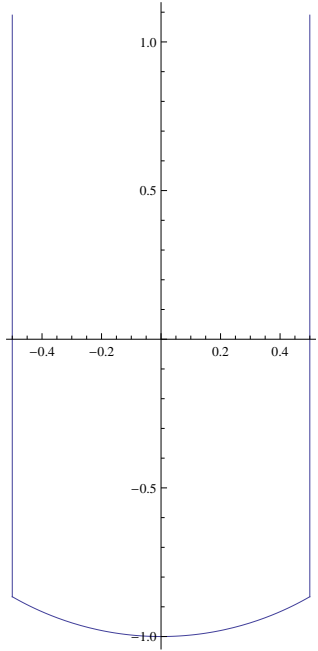}
\caption{The image of the fundamental domain $D$ under $h$}
\label{fig:himage}
\end{figure}

We now explain rigorously some relevant features of the picture.  Firstly, $h(\pm 1/2 + iy)$ has real part $\pm 1/2$, which follows easily from the fact that $E_2(\pm 1/2 + iy) \in \mr$.  Furthermore, we claim that with $\rho = e^{2 \pi i/3}$, $h(\rho) = \overline{\rho}$, $h(i) = -i$, and $h(1-\overline{\rho}) = 1- \rho$.  In general, if $\tau$ is an elliptic point, and $\gamma \in SL_2(\mz)$ fixes $\tau$, that is, $\gamma \tau = \tau$, $\gamma \neq 1$, then $h(\gamma \tau) = h(\tau) = \gamma h(\tau)$, so we immediately deduce that either $h(\tau) = \tau$ or $h(\tau) = \overline{\tau}$.  However, by a direct numerical calculation one can rule out the case that $h(\tau) = \tau$ for the three elliptic points, since $h(\tau) \approx \tau - \frac{6i}{\pi}$ (using $E_2(\tau) \approx 1$).  See also \cite[(9)-(10)]{BS} or Proposition 5.6 of \cite{SaSe} for this result on elliptic fixed points.
Also we note the simple fact that $\lim_{y \rightarrow \infty} h(\pm 1/2 + iy) = \infty$.  Taken together, this discussion means that $h$ maps the left and right sides of $D$ to vertical line segments from $\pm 1/2 - i \frac{\sqrt{3}}{2}$ to $\pm 1/2 + i \infty$.  
By a numerical evaluation of $h$ along the bottom arc $z = e^{i \theta}$, $\frac{\pi}{3} \leq \theta \leq \frac{2 \pi}{3}$, we obtain Figure \ref{fig:himage}.  We conclude that $h(\mathcal{R}_D) = [-1/2, 1/2]$, that is, on $D$, $h$ takes on every real value in $[-1/2, 1/2]$ at least once, and no other real values.

Finally, we argue that $h$ is injective for $y \geq .95$, which then implies $h$ takes every real (and hence rational) value exactly once.  We first claim that for $y \geq .95$, we have
\begin{equation}
\label{eq:h'estimate}
 |h'(z)  -1 | < .89.
\end{equation}
An exact formula for $h'$ is given by
\begin{equation}
 h'(z)  = 1 + \frac{6i}{\pi} \frac{E_2'(z)}{E_2(z)^2}.
\end{equation}
We have $E_2'(z) = 48\pi i \sum_{n=1}^{\infty} \sigma_1(n) n e(nz)$, whence by a computer calculation (safely taking say $100$ terms in the Fourier expansion)
\begin{equation}
|E_2'(x+iy)| \leq 48 \pi \sum_{n=1}^{\infty} \sigma_1(n) n e^{-2 \pi ny} < .4,
\end{equation}
and furthermore $|E_2(z) - 1| \leq .07$ (again, by a direct computer calculation), so $|E_2(z)| \geq .93$.
Thus
\begin{equation}
 |h'(z) - 1| < \frac{6}{\pi} \frac{.4}{(.93)^2} < .89.
\end{equation}

Next we argue that \eqref{eq:h'estimate} implies that $h$ is injective.
Suppose that $f$ is a holomorphic function on a convex set $C$ and that $|f'(z) - 1| \leq \delta < 1$ for all $z \in C$.  We prove that this implies $f$ is injective on $C$.
To see this, we begin by noting
\begin{equation}
 f(z_2) - f(z_1) = (z_2 - z_1) + \int_{z_1}^{z_2} (f'(s) - 1) ds,
\end{equation}
where the contour is a straight line segment connecting $z_1$ and $z_2$. Therefore, if $f(z_2) = f(z_1)$, we have
\begin{equation}
 |z_2 - z_1| \leq \int_{z_1}^{z_2} |f'(s) - 1| |ds| \leq \delta  |z_2 - z_1|,
\end{equation}
which is a contradiction unless $z_2 = z_1$.  That is, $f$ is injective on $C$.
This immediately implies that $h$ is injective for $y \geq .95$.
\end{proof}

Now we are ready to give the more precise version of Theorem \ref{thm:zeroapprox}.
\begin{mytheo}
\label{thm:locationofzeros}
Let $v_0 = 6/\pi$. Suppose that for $\tau_0 \in D$, $h(\tau_0) = \frac{a}{c}$ with $(a,c) = 1$, and let $\gamma = (\begin{smallmatrix} a & b\\ c & d \end{smallmatrix}) \in SL_2(\mz)$ with $|d/c| \leq 1/2$.  Then the zero $z_0 = \gamma^{-1} \tau_0$ of $E_2$ associated to $\tau_0$ (as in Proposition \ref{prop:E2zeroshrationals}) satisfies
 \begin{equation}
 \label{eq:locationofzeros}
  z_0 = - \frac{d}{c} + \frac{i}{c^2 v_0} + \frac{\theta}{c^2 v_0^2},
 \end{equation}
 where $\theta \in \mc$ satisfies $|\theta| < .000283$.
\end{mytheo}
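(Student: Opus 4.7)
The plan is to reduce everything to one algebraic identity for the Möbius transformation $\gamma^{-1}$, and then bound the resulting error term using the machinery already built for Theorem \ref{thm:RDsmallstrip}. First I would invoke \eqref{eq:hbound}, which (using $6/(\pi i) = -iv_0$) gives
\begin{equation*}
\bigl|h(\tau_0) - (\tau_0 - iv_0)\bigr| \leq \tfrac{6}{\pi}\cdot\tfrac{|E_2(\tau_0)-1|}{1-|E_2(\tau_0)-1|}.
\end{equation*}
Since $h(\tau_0) = a/c \in \mathbb{R}$, this bound says exactly that $\tau_0 = a/c + iv_0 + \delta$ for a complex error $\delta$ whose modulus is controlled. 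Re-running the iterative refinement in the proof of Theorem \ref{thm:RDsmallstrip} (successively plugging improved bounds on $y$ into Lemma \ref{lemma:E2bound} with $N=4$) gives $|\delta| < .000283$, i.e.\ the same constant.

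Next I would substitute $\tau_0 = a/c + \alpha$, where $\alpha := iv_0 + \delta$, directly into $z_0 = \gamma^{-1}\tau_0 = (d\tau_0 - b)/(-c\tau_0 + a)$. Using $ad-bc = 1$, the numerator collapses to $1/c + d\alpha$ and the denominator to $-c\alpha$, yielding the clean identity
\begin{equation*}
z_0 = -\frac{d}{c} - \frac{1}{c^2(iv_0 + \delta)}.
\end{equation*}
This is the heart of the proof: the Möbius computation is exact and makes the dependence on $c$ and $d$ completely transparent, with all the imprecision absorbed into the single quantity $\delta$.

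Finally I would expand $-1/[c^2(iv_0+\delta)]$ around $\delta = 0$. Writing $-1/(iv_0+\delta) = i/v_0 + \theta/v_0^2$ and solving gives $\theta = -iv_0\delta/(iv_0+\delta)$, hence
\begin{equation*}
|\theta| = \frac{v_0\,|\delta|}{|iv_0 + \delta|} \leq \frac{|\delta|}{1 - |\delta|/v_0},
\end{equation*}
which combined with $|\delta| < .000283$ and $v_0 = 6/\pi > 1.9$ gives $|\theta| < .000283$ up to the outer iteration. This matches \eqref{eq:locationofzeros}.

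The only real obstacle is numerical bookkeeping rather than anything conceptual: the geometric amplification factor $1/(1-|\delta|/v_0)$ threatens to push the final constant just past $.000283$. To keep the same clean number on both sides one must either squeeze one more pass of the iterative Fourier-tail estimate in Lemma \ref{lemma:E2bound} (exactly as in the proof of Theorem \ref{thm:RDsmallstrip}, where the constant $.000283$ was deliberately picked with this later application in mind), or equivalently observe that the amplification factor differs from $1$ by less than $1.5\times 10^{-4}$, which is absorbed in the iteration. Once this numerical check is made, the theorem follows.
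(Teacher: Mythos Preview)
Your proposal is correct and is essentially the paper's own argument. The paper writes $\tau_0 = \widehat{\tau_0} + \delta$ with $\widehat{\tau_0} = a/c + iv_0$, uses the identity $\alpha u - \alpha w = (u-w)/\bigl(j(\alpha,u)j(\alpha,w)\bigr)$ in place of your direct substitution into $(d\tau_0 - b)/(-c\tau_0 + a)$ (these give the identical expression), and then bounds the resulting $\delta/\bigl(c^2 v_0^2(1 - \delta/(iv_0))\bigr)$ exactly as you do; on the numerical point you flagged, the paper records the sharper $|\delta| < .0002821$ coming out of the iteration so that the amplification factor $1/(1-|\delta|/v_0)$ still lands below $.000283$.
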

Examples.  Let $\widehat{z}_c^d = -\frac{d}{c} + \frac{i}{c^2 v_0}$ be the approximate zero of $E_2$. 
\begin{equation}
\begin{aligned}
 h(\tau_0) = 0, \quad \gamma = \big(\begin{smallmatrix} 0 & -1 \\ 1 & 0 \end{smallmatrix}\big), \quad &\widehat{z}_1^0 = 0 + .523599 i, %\quad |\tau_1 - \widehat{\tau_1}| \approx 7.7  \times 10^{-5}, 
 \\
 h(\tau_0) = 1/2, \quad \gamma = \big(\begin{smallmatrix} 1 & 0 \\ 2 & 1 \end{smallmatrix}\big), \quad &\widehat{z}_2^1 = -\tfrac12 + .130899 i, %\quad 4|\tau_2 - \widehat{\tau_2}| \approx 7.7  \times 10^{-5} 
 \\
 h(\tau_0) = 1/3, \quad \gamma = \big(\begin{smallmatrix} 1 & 0 \\ 3 & 1 \end{smallmatrix}\big), \quad &\widehat{z}_3^1 = -\tfrac13 + .0581776 i, %\quad 9|\tau_3 - \widehat{\tau_3}| \approx 7.7  \times 10^{-5} 
 \\
 h(\tau_0) = 1/4, \quad \gamma = \big(\begin{smallmatrix} 1 & 0 \\ 4 & 1 \end{smallmatrix}\big), \quad &\widehat{z}_4^1 = -\tfrac14 + .0327249 i \\
 h(\tau_0) = 1/5, \quad \gamma = \big(\begin{smallmatrix} 1 & 0 \\ 5 & 1 \end{smallmatrix}\big), \quad &\widehat{z}_5^1 = -\tfrac15 + .020944 i \\
 h(\tau_0) = -2/5, \quad \gamma = \big(\begin{smallmatrix} -2 & 1 \\ 5 & 2 \end{smallmatrix}\big), \quad &\widehat{z}_5^2 = -\tfrac25 + .020944 i.
\end{aligned}
\end{equation}
%This table should be compared with \eqref{eq:zerotable}.

The zero $\widehat{z}_c^d$ above occurs in a geometrically special location as we now describe.  Consider the circle around infinity $C= \{ x + \frac{6i}{\pi} : x \in \mr \}$ (which is very close to the circles $C_{\pm}$ discussed following Proposition \ref{prop:E2zeroshrationals}).  The circle $\gamma^{-1} C$ with  $\gamma = (\begin{smallmatrix} a & b\\ c & d \end{smallmatrix}) \in SL_2(\mz)$ is tangent to the real line at $x=-\frac{d}{c}$ and has height ($=$ diameter) $\frac{1}{c^2 v_0}$.  The point mapped to the top of the circle is $\widehat{\tau}:=\frac{a}{c} + \frac{6i}{\pi}$, that is, $\gamma^{-1} (\frac{a}{c} + \frac{6i}{\pi}) = -\frac{d}{c} + \frac{i}{c^2 v_0}$, which is $\widehat{z}_c^d$, the first-order approximation on the right hand side of \eqref{eq:locationofzeros}.  In effect, the zero $z_0$ in \eqref{eq:locationofzeros} is very close to the peaks of the circles $\gamma^{-1} C_{\pm}$.  See Figure \ref{fig:zerosoncircles} for a picture.
\begin{figure}[h]
\includegraphics[width=0.5\textwidth]{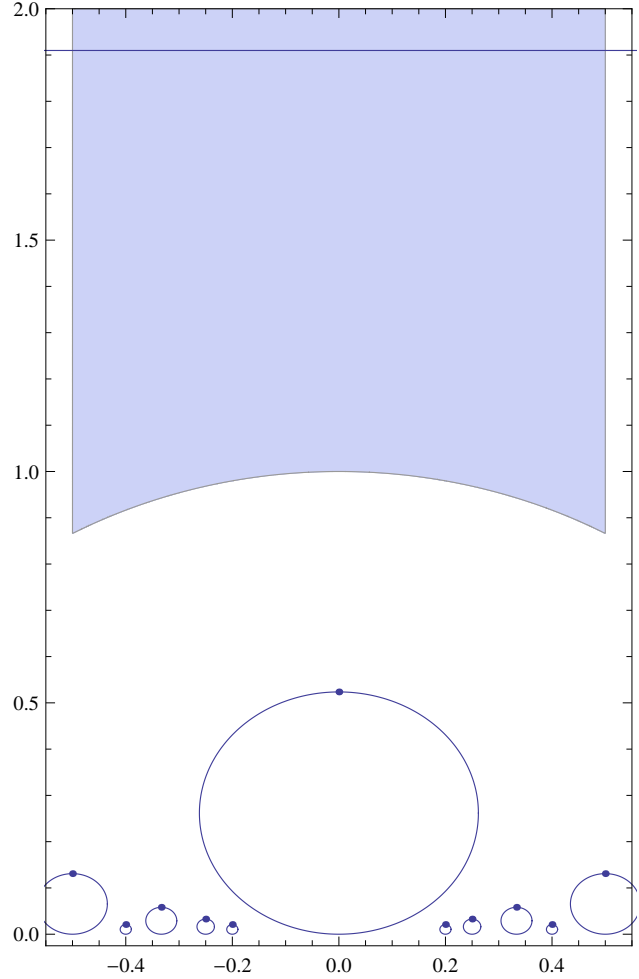}
\caption{The zeros of $E_2$ overlaid on $\mathcal{R}$}
\label{fig:zerosoncircles}
\end{figure}

In fact, we can prove a more precise approximation.
\begin{mytheo}
\label{thm:locationofzeros2}
 Let conditions and notation be as in Theorem \ref{thm:locationofzeros}.  Let $\lambda_0 = 24 \frac{6}{\pi} e^{-2 \pi v_0} = .000281\dots$.  Then the zero $z_0$ satisfies
 \begin{equation}
  z_0 = -\frac{d}{c} + \frac{\lambda_0 \sin(2 \pi \frac{a}{c})}{c^2 v_0^2} + i \frac{1 - \frac{\lambda_0}{v_0} \cos(2 \pi \frac{a}{c})}{c^2 v_0} + O(c^{-2} e^{-4 \pi v_0}),
 \end{equation}
 where the implied constant is absolute.
\end{mytheo}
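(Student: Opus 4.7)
The idea is to extract the zero $z_0 = \gamma^{-1}\tau_0$ from a sharpened asymptotic for $\tau_0$ itself, obtained from the defining identity $h(\tau_0) = a/c$. Since $h(z) = z - \frac{6i}{\pi E_2(z)}$, this rearranges to
\[
 \tau_0 = \frac{a}{c} + \frac{6i}{\pi E_2(\tau_0)}.
\]
Theorem~\ref{thm:locationofzeros} already locates $z_0$ within $O(c^{-2} v_0^{-2})$ of $-d/c + i/(c^2 v_0)$; applying $\gamma$ to this estimate (a short computation using $ad-bc=1$) shows $\tau_0 - (a/c + iv_0) = O(\lambda_0)$. In particular $\operatorname{Im}(\tau_0)$ is extremely close to $v_0$, and the Fourier tail of $E_2$ at $\tau_0$ is exponentially small.

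First I would truncate the Fourier series at the first non-constant term. Lemma~\ref{lemma:E2bound} with $N=1$ gives $E_2(\tau_0) - 1 = -24\,e(\tau_0) + O(e^{-4\pi v_0})$, and hence $1/E_2(\tau_0) = 1 + 24\,e(\tau_0) + O(e^{-4\pi v_0})$. Combining this with the bootstrap $e(\tau_0) = e^{2\pi i a/c} e^{-2\pi v_0}\bigl(1 + O(\lambda_0)\bigr)$ and using $24 v_0 e^{-2\pi v_0} = \lambda_0$, substitution into the boxed equation yields
\[
 \tau_0 = \frac{a}{c} + iv_0 + i\lambda_0\,e^{2\pi i a/c} + O(e^{-4\pi v_0}).
\]

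To finish I would compute $z_0 = \gamma^{-1}\tau_0$ explicitly. Writing $\tau_0 = a/c + w$, the identity $ad-bc = 1$ collapses the Möbius transformation into the clean form
\[
 z_0 = \frac{d\tau_0 - b}{-c\tau_0 + a} = \frac{1/c + dw}{-cw} = -\frac{d}{c} - \frac{1}{c^2 w}.
\]
With $w = iv_0\bigl(1 + (\lambda_0/v_0)\,e^{2\pi i a/c}\bigr) + O(e^{-4\pi v_0})$, a geometric-series expansion of $1/w$ (second-order terms are of size $(\lambda_0/v_0)^2 = O(e^{-4\pi v_0})$) gives
\[
 -\frac{1}{c^2 w} = \frac{i}{c^2 v_0} - \frac{i\lambda_0\,e^{2\pi i a/c}}{c^2 v_0^2} + O\bigl(c^{-2} e^{-4\pi v_0}\bigr),
\]
and separating real and imaginary parts via $-i e^{2\pi i a/c} = \sin(2\pi a/c) - i\cos(2\pi a/c)$ produces exactly the formula in the statement.

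The one conceptual step is the bootstrap: Theorem~\ref{thm:locationofzeros} must deliver $|\tau_0 - (a/c + iv_0)| = O(\lambda_0)$, which upgrades $e(\tau_0)$ by the multiplicative factor $1 + O(e^{-2\pi v_0})$. This is precisely what is needed so that replacing $e(\tau_0)$ by $e^{2\pi i a/c} e^{-2\pi v_0}$ incurs only an additive error of size $e^{-4\pi v_0}$; without such a preliminary bound, the sharpening over Theorem~\ref{thm:locationofzeros} would be lost. Everything else is a disciplined bookkeeping exercise, and all constants are absolute because $v_0 = 6/\pi$ is a fixed numerical quantity.
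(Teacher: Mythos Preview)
Your argument is correct and follows essentially the same route as the paper's: approximate $\tau_0$ via $E_2(\tau_0) = 1 - 24\,e(\tau_0) + O(e^{-4\pi v_0})$ together with the bootstrap from Theorem~\ref{thm:locationofzeros}, then transport through $\gamma^{-1}$ (the paper separates into real and imaginary parts and feeds the resulting $\delta$ into \eqref{eq:alphaz0approximation}, whereas you keep the complex form $z_0 = -d/c - 1/(c^2 w)$ and expand $1/w$, but these are algebraically the same computation). One small slip: Lemma~\ref{lemma:E2bound} with $N=1$ only yields an $O(e^{-2\pi v_0})$ tail bound, so to justify $E_2(\tau_0)-1 = -24\,e(\tau_0) + O(e^{-4\pi v_0})$ you should invoke the lemma with $N=2$ and absorb the $n=2$ term directly.
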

For simplicity, we chose not to explicitly bound the error term in this expression as we did in Theorem \ref{thm:locationofzeros}, but this could be done with more effort.  In practice, Theorem \ref{thm:locationofzeros2} has a few more digits of accuracy than that of Theorem \ref{thm:locationofzeros}.
The proof is a recursive linearization argument similar to Newton's method, which could be extended to give more digits of accuracy at the expense of having increasingly complicated expressions.

\begin{proof}[Proofs]
We begin with Theorem \ref{thm:locationofzeros}.
If $h(\tau_0) = \frac{a}{c}$, then from \eqref{eq:hbound} and the fact that $|E_2(\tau_0) - 1| < .00014\dots$ (which arose in the proof of Theorem \ref{thm:RDsmallstrip}), we have
% \begin{equation}
%  \frac{a}{c} = z + \frac{6}{\pi i} + \frac{6}{\pi i} \frac{1- E_2(z)}{1 + (E_2(z)-1)},
% \end{equation}
% and
\begin{equation}
 \Big|\tau_0 - \frac{a}{c} - \frac{6i}{\pi}\Big| \leq \frac{6}{\pi} \frac{|1- E_2(\tau_0)|}{1 - |E_2(\tau_0)-1|} < \varepsilon, 
\end{equation}
with $\varepsilon = .0002821\dots$.  That is, with $\widehat{\tau_0} = \frac{a}{c} + \frac{6i}{\pi}$, we have $|\tau_0 - \widehat{\tau_0}| < \varepsilon$.

By a direct calculation, we have with $\alpha = \gamma^{-1} =  (\begin{smallmatrix} d & -b\\ -c & a \end{smallmatrix})$
\begin{equation}
\alpha \widehat{\tau_0} = \frac{d(\frac{a}{c} + iv_0) - b}{-c(\frac{a}{c} + iv_0) + a} = \frac{\frac{1}{c} + div_0}{-civ_0} = -\frac{d}{c} + \frac{i}{c^2 v_0} =: \widehat{z_0}.
\end{equation}
For any $\alpha  \in SL_2(\mz)$, the following elementary identity is valid:
 \begin{equation}
  \alpha u - \alpha w = \frac{u-w}{j(\alpha, u) j(\alpha, w)}.
 \end{equation}
We apply this with $w = \widehat{\tau_0}$ %= \frac{a}{c} + \frac{6i}{\pi}$
and $u = \tau_0 = \frac{a}{c} + \frac{6i}{\pi} + \delta$ (so $\delta \in \mc$ and $|\delta| < \varepsilon = .0002821\dots$), and $\alpha  = (\begin{smallmatrix} d & -b\\ -c & a \end{smallmatrix}) = \gamma^{-1}$.  Now $h(\alpha \tau_0) = \alpha \frac{a}{c} = \infty$, so $\alpha \tau_0 = z_0$ is the zero of $E_2$ corresponding to this rational value of $h$, as in Proposition \ref{prop:E2zeroshrationals}.  We thus have
\begin{equation}
\label{eq:alphaz0approximation}
z_0= \alpha \tau_0 = \alpha \widehat{\tau_0} + \frac{\delta}{(-civ_0)(-civ_0 + c \delta)} = -\frac{d}{c} + \frac{i}{c^2 v_0} - \frac{\delta}{c^2 v_0^2(1 - \frac{\delta}{i v_0})}.
\end{equation}
Thus
\begin{equation}
\Big|z_0 - \big(-\frac{d}{c} + \frac{i}{c^2 v_0}\big)\Big| < \frac{|\delta|}{c^2 v_0^2} \frac{1}{1-|\frac{\delta}{v_0}|} < \frac{.000283}{c^2 v_0^2}.
\end{equation} 
This gives Theorem \ref{thm:locationofzeros}.
 
 Now we prove Theorem \ref{thm:locationofzeros2}.  The main difference here is simply that we include the next term in the Fourier expansion of $E_2(z)$.  For $z \in \mathcal{R}_D$, we have
 \begin{equation}
 |E_2(z) - (1 - 24 e(x) e^{-2 \pi y})| < 2.73 \times 10^{-9},
 \end{equation} 
 which is approximately $24 \sigma_1(2) e^{-4 \pi v_0} \approx 2.72 \times 10^{-9} $.
 Then we have
 \begin{equation}
 h(z) = z + \frac{\frac{6}{\pi i}}{1 - 24 e(x) e^{- 2\pi y} + \eta} = z + \frac{6}{\pi i} \Big(1 + 24e(x) e^{-2\pi y}- \eta + \frac{(24e(x) e^{-2\pi y}- \eta)^2}{1 - 24 e(x) e^{- 2\pi y} + \eta} \Big),
 \end{equation}
 where $\eta \in \mc$ satisfies $|\eta| < 2.73 \times 10^{-9}$.
 Taking a Taylor expansion, we have
 \begin{equation}
 h(z) = z + \frac{6}{\pi i} + 24 \frac{6}{\pi i} e^{-2\pi v_0} e(x) e^{-2 \pi (y-v_0)} + O(e^{-4 \pi v_0}).
 \end{equation}
 Using $|y-\frac{6}{\pi}| = |y-v_0| < .000283 = O(e^{-2 \pi v_0})$, we obtain that for $z \in \mathcal{R}$, 
 \begin{equation}
 h(z) = z + \frac{6}{\pi i} + \frac{\lambda_0}{i} e(x) +O(e^{-4 \pi v_0}).
 \end{equation}
 Grouping by the real and imaginary parts, it becomes
 \begin{equation}
 h(z) = x + \lambda_0 \sin(2 \pi x) + i(y -\frac{6}{\pi} - \lambda_0 \cos(2 \pi x)) + O(e^{-4 \pi v_0}).
 \end{equation}
 Setting $h(z) = \frac{a}{c}$, we obtain
 \begin{align*}
 x + \lambda_0 \sin(2 \pi x) = \frac{a}{c} + O(e^{-4 \pi v_0}) \\
 y - \frac{6}{\pi} - \lambda_0 \cos(2 \pi x) = O(e^{-4 \pi v_0}) .
 \end{align*}
 To first approximation, $x = \frac{a}{c} + O(e^{-2 \pi v_0})$ and $y = \frac{6}{\pi} + O(e^{-2 \pi v_0})$, and since $\lambda_0 = O(e^{-2 \pi v_0})$, we may use this first-order approximation inside $\cos$ and $\sin$.  That is, we have the approximations
 \begin{align*}
 x = \frac{a}{c} - \lambda_0 \sin(2 \pi \frac{a}{c}) + O(e^{-4 \pi v_0}) \\
 y = \frac{6}{\pi} + \lambda_0 \cos(2 \pi \frac{a}{c}) + O(e^{-4 \pi v_0}).
 \end{align*}
 That is, the $\delta$ appearing in \eqref{eq:alphaz0approximation} is given by $\delta = - \lambda_0 \sin(2 \pi \frac{a}{c}) + i \lambda_0 \cos(2 \pi \frac{a}{c}) + O(e^{-4 \pi v_0})$, so we now have
 \begin{equation}
 z_0 = -\frac{d}{c} + \frac{\lambda_0 \sin(2 \pi \frac{a}{c})}{c^2 v_0^2} + i \frac{1- \frac{\lambda_0}{v_0} \cos(2 \pi \frac{a}{c})}{c^2 v_0^2} + O(e^{-4 \pi v_0}). \qedhere
 \end{equation}
\end{proof}

%\begin{figure}

\end{document}